\newtheorem{theo}{Theorem}
\newtheorem{lemm}{Lemma}
\newcommand{\lbl}{\label}
\newcommand{\Po}{{\cal P}}
\newcommand{\Q}{{\cal Q}}
\def\N{\mathbb{N}}
\def\R{\mathbb{R}}
\def\E{\mathbb{E}}
\def\Pr{\mathbb{P}}
\def\0{{\bf 0}}
\renewcommand{\E}{\mathbb E \,}
\newcommand{\eqco}{\setcounter{equation}{0}}
\newcommand{\thco}{\setcounter{theo}{0}}
\newcommand{\prco}{\setcounter{prop}{0}}
\newcommand{\laco}{\setcounter{lemm}{0}}
\newcommand{\coco}{\setcounter{coro}{0}}
\newcommand{\cjco}{\setcounter{conj}{0}}
\newcommand{\deco}{\setcounter{defn}{0}}
\newcommand{\allco}{\eqco  \thco \prco \laco \coco \cjco \deco}
\newcommand{\X}{{\cal X}}
\newcommand{\A}{{\cal A}}
\newcommand{\Y}{{\cal Y}}
\newcommand{\I}{{\cal I}}
\newcommand{\eps}{\varepsilon}
\def\bdm{\begin{displaymath}}
\newcommand{\edm}{\end{displaymath}}
\def\benu{\begin{enumerate}}
\def\eenu{\end{enumerate}}
\def\beqn{\begin{equation}}
\def\eeqn{\end{equation}}
\def\be{\begin{equation}}
\def\ee{\end{equation}}
\def\bea{\begin{eqnarray}}
\def\eea{\end{eqnarray}}
\newcommand{\bean}{\begin{eqnarray*}}
\newcommand{\eean}{\end{eqnarray*}}
\newcommand{\bear}{\begin{eqnarray}}
\newcommand{\eear}{\end{eqnarray}}
\renewcommand{\epsilon}{\varepsilon}
\def\R{\mathbb{R}}
\renewcommand{\P}{{\mathbb P}}
\def\A{{\cal A}}
\def\qed{\hfill\hbox{${\vcenter{\vbox{
    \hrule height 0.4pt\hbox{\vrule width 0.4pt height 6pt
    \kern5pt\vrule width 0.4pt}\hrule height 0.4pt}}}$}}
\def\la{{\lambda}}
\begin{document}
\title{\bf On the critical threshold for continuum AB percolation}

\author{
 David Dereudre$^{1}$
and
Mathew D. Penrose$^{2}$\\
{\normalsize{\em Universit\'e de Lille and  University of Bath}}
 }

\maketitle


 \footnotetext{ $~^1$ Laboratoire Paul Painlev\'e, Universit\'e de Lille, France: david.dereudre@univ-lille1.fr}
 \footnotetext{ $~^2$ Department of
Mathematical Sciences, University of Bath, Bath BA2 7AY, United
Kingdom: {\texttt m.d.penrose@bath.ac.uk} }




\begin{abstract}
Consider a bipartite random geometric graph on
the union of two independent homogeneous Poisson point processes
in $d$-space, with distance parameter $r$ and intensities
$\lambda,\mu$. For any $\lambda>0$ we consider the percolation threshold $\mu_c(\lambda)$ associated to the parameter $\mu$. Denoting by
 $\lambda_c:= \lambda_c(2r)$ the percolation threshold for the
 standard Poisson Boolean model with radii $r$,
 we  show the lower bound $\mu_c(\lambda)\ge c\log(c/(\lambda-\lambda_c))$ 
for any $\lambda>\lambda_c$ with $c>0$ a fixed constant. In particular,
 $\mu_c(\lambda)$ tends to infinity when $\lambda$ tends to $\lambda_c$
from above.

\end{abstract}


\section{Introduction and statement of results}
In the continuum AB percolation model,
particles of two types A and B are scattered 
randomly in  Euclidean space as 
two independent Poisson processes, and edges
are added between particles of opposite type
that are sufficiently close together. This
provides a continuum analogue to lattice AB percolation
which is discussed in e.g. \cite{Grimm}. 
The model was introduced by Iyer and Yogeshwaran \cite{IY}, where
motivation is discussed in detail;
the main motivation comes from wireless communications networks
with two types of transmitter.
As discussed in \cite{PenAB},  a complementary (but distinct) continuum
percolation model with two types of particle is
the {\em  secrecy random graph} \cite{HS,PW}.

To describe continuum AB percolation more precisely, we make some
definitions. Let $d \in \N$.
Given any two locally finite sets $\X,\Y \subset \R^d$,
and given $r >0$, 
let $G(\X,\Y,r)$ be the 
bipartite graph with vertex sets $\X$ and $\Y$, 
 and
with an undirected edge $\{X,Y\}$ included
for each $X \in  \X$ and $Y \in \Y$ with 
 $|X-Y| \leq r$, where $|\cdot|$ is the
Euclidean norm in $\R^d$. Also,
let $G(\X,r)$ be the graph with vertex set $\X$
and
with an undirected edge $\{X,X'\}$ included
for each $X,X' \in  \X$ 
 with $|X-X'| \leq r$.

For $\lambda,\mu>0$  let $\Po_\lambda$, $\Q_\mu$ be
independent homogeneous Poisson point
 processes in $\R^d$ of intensity $\lambda, \mu $
respectively,  where
 we view each point process as a random subset of $\R^d$.
We  are here concerned with the 
 bipartite graph 
 $G(\Po_\lambda , \Q_\mu,r)$.


Let $\I$ be the class of   graphs
having at least one infinite component.
By a version of the
Kolmogorov zero-one law,
given parameters $r,\lambda,\mu$ (and $d$), we have
 $\P[ G(\Po_\la, \Q_\mu,r ) \in \I] \in \{0,1\}$.
Provided $r, \lambda,$ and $\mu$ are sufficiently large, 
we have $\P[ G(\Po_\la, \Q_\mu,r ) \in \I] = 1$; see
\cite{IY}, or \cite{PenAB}.
Set
$$
\mu_c(r,\lambda): = \inf \{\mu:\P [ G(\Po_\lambda,\Q_\mu,r)  \in \I]
=1\},
$$
with the infimum of the empty set interpreted as $+\infty$.
Also, for  the more standard one-type continuum percolation
graph $G(\Po_\la,r)$,
define
$$
\lambda_c(r): = \inf \{\lambda:\P [ G(\Po_\la,r) \in \I]=1\}.
$$
By scaling (see Proposition 2.11 of \cite{MR})
$\lambda_c(2r) = r^{-d} \lambda_c(2)$. The value of
 $\lambda_c(2)$ 
 is not  known analytically, but  
 is well known to be finite for $d \geq 2$ \cite{Grimm,MR},
and explicit bounds 
 are provided in \cite{MR}.
Simulation studies indicate that $1- e^{-\pi \lambda_c(2)} \approx 0.67635$
for $d=2$ \cite{QZ} and
 $1 - e^{-(4\pi/3) \lambda_c(2)} \approx 0.28957$ for
$d=3$ \cite{LZ}.

Obviously if $G(\Po_\la,\Q_\mu,r)\in \I$ then also
$G(\Po_\la,2r) \in \I$,
 and hence 
 $\mu_c(r,\lambda) = \infty$ for 
$\lambda < \lambda_c(2r)$. In \cite{IY,PenAB}, it
is proved that $\mu_c(r,\lambda) < \infty $ for $\lambda > \lambda_c(2r)$.
Indeed,
%
with $\pi_d$ denoting the volume of the unit radius ball in $d$ dimensions
we have
from \cite{PenAB} that
\bea
\limsup_{\delta \downarrow 0}
\left(
\frac{
\mu_c(r,\lambda_c + \delta) }{
\delta^{-2d} |\log \delta| } \right)
 \leq \left(\frac{4 \lambda_c(2r)^2}{r}
\right)^d d^{3d}(d+1) \pi_d.
\label{upbd}
\eea
 It is also indicated in \cite{PenAB} how, for any
given $\lambda > \lambda_c(2r)$, one can
compute an explicit upper bound 
 for $\mu_c(r,\lambda)$.

As mentioned in \cite{PenAB}, it is of interest to
give complementary {\em lower} bounds for $\mu_c(r,\lambda)$.
In this note, we make some progress in this direction by
showing that there exists $c>0$ (depending on $d$) such that for all
 $\delta>0$ we have

\bea
\label{lowerbound}
\mu_c(\frac 12,\lambda_c(1)+\delta) \ge  c \log(c/\delta).
\eea
By scaling arguments, the previous lower bound is true for any radius
 $r>0$ (after changing the constant $c$), so in particular,
\bea
\label{maineq}
\lim_{\delta \downarrow 0}
\mu_c(r,\lambda_c(2r)+\delta) = + \infty.
\eea
Immediately we obtain 
\bea
\mu_c(r,\lambda_c(2r)) = + \infty.
\label{criteq}
\eea

We note from (\ref{upbd}) that if $\lambda > \lambda_c(2r)$,
we can find finite $\mu$ such that $G(\Po_\lambda,\Q_\mu,r) \in \I$
almost surely. If we were able to prove this under the weaker hypothesis
that $G(\Po_\lambda,2r) \in \I$ almost surely, then combining this with
(\ref{criteq}) we would have shown that in fact 
$G(\Po_{\lambda_c},2r) \notin \I$ almost surely, which would
 solve the classic open problem of proving non-percolation
at the critical point  (in any dimension) for this continuum percolation model.

We shall prove (\ref{lowerbound}) in the next section.
Our strategy of proof goes as follows. We deem all A-particles
having no B-particle nearby to be {\em useless}, since they cannot
be used in any percolating AB cluster. Given $\mu$, we use
  a version of the technique of
{\em enhancement} to show that there exists a value of $\lambda$
such that $\Po_\lambda$
 is supercritical for $A$-percolation (with distance parameter
$2r$) but becomes subcritical after removal of all the 
useless particles (a thinning process with only local dependence).   
The technique of enhancement has previously been applied
to one-type continuum percolation  in \cite{FPR0,FPR}, and 
further discussion of enhancement can be found there.


\section{Proof of the lower bound}
\lbl{secpf1}
\allco
%

This section is devoted to proving the following theorem

\begin{theo}
\label{mainthm}
There exists $c>0$ (depending on $d$) such that for all $\delta>0$,
we have (\ref{lowerbound}). In particular,
\bea
\lim_{\delta \downarrow 0} \mu_c( \frac{1}{2}, \lambda_c(1)+ \delta ) = 
+ \infty.
\label{muclim}
\eea
\end{theo}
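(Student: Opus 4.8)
The plan is to reduce the theorem to a statement about ordinary one‑type continuum percolation of a thinned Poisson process, and then to establish the required \emph{quantitative} strict inequality between critical intensities by an enhancement argument. \textbf{Step 1: reduction to a thinned Gilbert graph.} Call $X\in\Po_\lambda$ \emph{useful} if $\Q_\mu\cap B(X,\tfrac12)\neq\emptyset$ (here $B(x,s)$ denotes the ball of radius $s$ about $x$), and let $\Po_\lambda^*\subseteq\Po_\lambda$ be the set of useful points. If $G(\Po_\lambda,\Q_\mu,\tfrac12)\in\I$ then an infinite component contains infinitely many A‑points (a B‑point has only finitely many A‑neighbours, so a component with finitely many A‑points is finite), each of them useful, and consecutive A‑points along an alternating path lie within distance $\tfrac12+\tfrac12=1$; hence $G(\Po_\lambda^*,1)\in\I$. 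Moreover $\Po_\lambda^*=\Po_\lambda\cap Z$, where $Z:=\bigcup_{Y\in\Q_\mu}B(Y,\tfrac12)$ is the occupied region of an independent Boolean model of intensity $\mu$ and radius $\tfrac12$; thus $\Po_\lambda^*$ is $\Po_\lambda$ thinned by the independent random set $Z$, a given location lies outside $Z$ with probability $p:=e^{-c_2\mu}$ where $c_2:=2^{-d}\pi_d$, and the thinning is $1$‑dependent (statuses of points more than distance $1$ apart are governed by disjoint parts of $\Q_\mu$). Put $\lambda_c^*(\mu):=\sup\{\lambda:G(\Po_\lambda^*,1)\notin\I\text{ a.s.}\}$; since $\Po_\lambda^*\subseteq\Po_\lambda$ and $G(\Po_\lambda,1)\notin\I$ for $\lambda<\lambda_c(1)$, we have $\lambda_c^*(\mu)\geq\lambda_c(1)$ for all $\mu$.

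\textbf{Step 2: a quantitative gap suffices.} It is enough to exhibit a constant $c_1>0$ (depending on $d$) with $\lambda_c^*(\mu)\geq\lambda_c(1)+c_1 e^{-c_2\mu}$ for all $\mu>0$. Indeed, given $\delta\in(0,c_1)$, set $\mu:=\tfrac12 c_2^{-1}\log(c_1/\delta)>0$; then $c_1e^{-c_2\mu}=\sqrt{c_1\delta}>\delta$, so $\lambda_c(1)+\delta<\lambda_c^*(\mu)$, whence (the thinned model being monotone in $\lambda$) $G(\Po^*_{\lambda_c(1)+\delta},1)\notin\I$ a.s., and by Step 1 $G(\Po_{\lambda_c(1)+\delta},\Q_\mu,\tfrac12)\notin\I$ a.s.; since adjoining B‑points only creates edges, the same holds with $\mu$ replaced by any smaller intensity, so $\mu_c(\tfrac12,\lambda_c(1)+\delta)\geq\mu=\tfrac12 c_2^{-1}\log(c_1/\delta)$. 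With $c:=\min\{c_1,\tfrac12 c_2^{-1},1\}$ this gives (\ref{lowerbound}) for $\delta<c_1$, while for $\delta\geq c_1\geq c$ the right side of (\ref{lowerbound}) is $\leq0$ and there is nothing to prove; (\ref{muclim}) is then immediate.

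\textbf{Step 3: the enhancement estimate (the main obstacle).} It remains to prove the gap $\lambda_c^*(\mu)\geq\lambda_c(1)+c_1e^{-c_2\mu}$. The graph $G(\Po_\lambda^*,1)$ is the Gilbert graph $G(\Po_\lambda,1)$ after a \emph{de‑enhancement}: each point is deleted, with local $1$‑dependent randomness, with probability $p=e^{-c_2\mu}$. Deleting points is an \emph{essential} perturbation, and the enhancement technique — previously applied to one‑type continuum percolation in \cite{FPR0,FPR} — should yield not merely $\lambda_c^*(\mu)>\lambda_c(1)$ but a gap at least proportional to the de‑enhancement strength $p$ as long as $p$ is small (the linear response in the enhancement theorem). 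To make this precise and uniform in $\mu$, the plan is to fix a large cube $\Lambda_n$ of side $n$, let $f_n(\lambda)$ (resp.\ $g_n(\lambda)$) be the probability that $G(\Po_\lambda^*\cap\Lambda_n,1)$ (resp.\ $G(\Po_\lambda\cap\Lambda_n,1)$) joins two opposite faces of $\Lambda_n$, and to prove, via a Russo‑type formula that weighs the response of $f_n$ to an increment of the A‑intensity against its response to enlarging $Z$ (equivalently, to increasing $\mu$, which enlarges $Z$ only on a set of density of order $e^{-c_2\mu}$), a comparison of the form
$$f_n(\lambda)\ \leq\ g_n\!\left(\lambda-c_1 e^{-c_2\mu}\right)$$
with $c_1$ \emph{independent of} $n$. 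Letting $n\to\infty$ and invoking the standard renormalisation (block‑argument) description of the supercritical phase of the Gilbert graph, applicable to both models (see \cite{MR}), then turns this into $\lambda_c^*(\mu)\geq\lambda_c(1)+c_1e^{-c_2\mu}$. The hard part is exactly this programme: obtaining the Russo‑type inequality with a constant uniform in the box size $n$, correctly handling the $1$‑dependence of the thinning (an \emph{independent} thinning would literally reduce the A‑intensity and the claim would be nearly immediate), verifying that the perturbation is essential, and tracking the factor $e^{-c_2\mu}$ — the ultimate source of the logarithmic rate in (\ref{lowerbound}) — through every estimate.
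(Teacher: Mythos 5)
Your Steps 1--2 are correct and closely mirror the paper's set-up: you reduce the AB-percolation statement to the one-type Gilbert graph of the thinned process $\Po_\lambda^*$ of useful A-points, observe that the thinning is $1$-dependent with per-point deletion probability $e^{-\pi_d\mu/2^d}$, and correctly deduce that a uniform-in-$\mu$ gap $\lambda_c^*(\mu)\geq\lambda_c(1)+c_1e^{-c_2\mu}$ would give the theorem. This is the same skeleton as the paper (which realizes the thinning via the two-parameter family $\Po_{\lambda_0,p,q}$, fixing $\lambda_0>\lambda_c(1)$ and retaining useful points with probability $p$ and useless points with probability $q$; the identity $\Po_{\lambda_0,p,p}\eqd\Po_{\lambda_0 p}$ plays the role of your comparison with the unthinned intensity).

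However, your Step 3 is not a proof — it is a restatement of what needs to be proven, and you say so yourself (``The hard part is exactly this programme''). The comparison inequality $f_n(\lambda)\leq g_n(\lambda-c_1e^{-c_2\mu})$ with a constant uniform in $n$ is precisely where the paper's technical work lives, and it does not follow from a generic appeal to ``the enhancement technique.'' The paper supplies two ingredients that your sketch omits entirely: (i) a Russo/Margulis formula for the two derivatives $\partial\theta_n/\partial p$ and $\partial\theta_n/\partial q$, derived by conditioning on the $\sigma$-algebra of the underlying Poisson pair $(\Po_{\lambda_0},\Q_\mu)$ and applying the Mecke formula (Lemma \ref{lemmeRusso}); and, more importantly, (ii) the pivotal-ratio bound of Lemma \ref{lemmegeometric}, which says that uniformly over $n\geq c^{-1}$, $p,q\geq\alpha$ and $x\in B_{n+1}$ one has $\Pr[A_{x,n,p,q}\cap F_x^c]\geq ce^{-\mu/c}\,\Pr[A_{x,n,p,q}\cap F_x]$. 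Item (ii) is where the factor $e^{-\mu/c}$ and the uniformity in $n$ come from, and proving it requires a genuinely delicate exploration-plus-surgery argument: one reveals the Poisson configuration outward of a fixed ball $B_R(x)$, then shows via the geometric constructions of Lemmas \ref{lemmegeo1} and \ref{lemmegeo2} (which handle several boundary cases) that one can always reroute the two arms reaching $\partial B_R(x)$ through explicitly placed $\delta$-balls in the interior, keeping them disjoint, far from $\partial B_R(x)$, and far from $B_{1/2}(x)$, at a cost bounded by a constant times the probability $e^{-\pi_d\mu/2^d}$ that $\Q_\mu$ vacates $B_{1/2}(x)$. None of that is present in your proposal, and it cannot be supplied by citing the standard enhancement literature, which establishes strict inequalities of critical points but not the explicit $\mu$-dependence needed here. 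As it stands the argument has a gap at exactly the step the theorem turns on.

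One smaller point: your remark that ``an \emph{independent} thinning would literally reduce the A-intensity and the claim would be nearly immediate'' is misleading. Even for an independent thinning, knowing $\lambda_c^*(\mu)\geq\lambda_c(1)(1-e^{-c_2\mu})^{-1}$ (which is what independence would give) is a gap of order $e^{-c_2\mu}$ only because $\lambda_c(1)>0$; more to the point, the paper's thinning \emph{is not} independent — useful/useless status is determined by the shared process $\Q_\mu$, hence $1$-dependent — and that is precisely why one cannot shortcut to a reduced intensity and must instead run the two-parameter Russo/enhancement machinery. Your Step 3 acknowledges this but does not resolve it.
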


Fix $\mu >0$. To prove (\ref{muclim}),  we need to show that
there exists $\lambda > \lambda_c(1)$ such that
$G(\Po_\lambda,\Q_\mu,1/2) \notin \I$ almost surely.
To obtain (\ref{lowerbound}) we need a suitable quantitative lower bound
for this $\lambda$ (or rather, for $\lambda - \lambda_c$) in terms of $\mu$.

We fix some $\lambda_0 > \lambda_c(1)$. Given a
 realization of $(\Po_{\lambda_0},\Q_\mu)$, let
us say that a point $x \in \R^d$ is {\em useless} 
 if no point  of $\Q_\mu$ lies within distance $1/2$ of $x$.
 Otherwise, let us say $x$ is {\em useful}.
We shall apply these notions mainly (but not always)
in the case where $x$ is itself a point of
 $\Po_{\lambda_0}$.
 

Given also $p,q \in [0,1]$ let
 $\Po_{\lambda_0,p,q}$ be a thinned
version of $\Po_{\lambda_0}$ where each useful
point is independently retained with probability $p$ and
each useless point is independently retained with probability $q$. In particular $\Po_{\lambda_0,p,p}$ has the same distribution 
as $\Po_{\lambda_0 p}$.

For $R>0$ let $B_R$ denote the Euclidean ball of radius $R$ 
centred at the origin.  Let $\theta(p,q)$ be the probability that there exists
an infinite component of $G(\Po_{\lambda_0,p,q},1)$ that
includes at least one vertex in $B_1$, and, for $n \in \N$, let
 $\theta_n(p,q)$ be the probability that there exists
a component of $G(\Po_{\lambda_0,p,q},1)$ that
includes at least one vertex in $B_1$ and
at least one vertex outside $B_n$.
Then for all $p,q$ we have $\theta(p,q)= \lim_{n \to \infty} \theta_n(p,q)$.

\begin{lemm}\label{lemmeRusso}
For any $x\in\R^d$, we denote by $A_{x,n,p,q}$ the event that $G(\Po_{\lambda_0,p,q} \cup \{x\},1)$ contains a path including vertices both in $B_1$ and in $B_n^c$, but
$G(\Po_{\lambda_0,p,q} ,1)$ does not,
 and $F_x$ is the event that the vertex at $x$ is
useful.

 Then for any $n\ge 1$ and $p,q \in (0,1)$,
$$
\frac{\partial \theta_n(p,q)}{\partial p} = \int_{B_{n+1}} \Pr[ A_{x,n,p,q} \cap F_x ]  \lambda_0 dx 
$$
and 
$$
\frac{\partial \theta_n(p,q)}{\partial q} =  \int_{B_{n+1}} \Pr[ A_{x,n,p,q} \cap F_x^c ] \lambda_0 dx,
$$
\end{lemm}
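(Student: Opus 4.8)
The plan is to reduce both identities to a Margulis--Russo type derivative formula for a Poisson process on the bounded window $B_{n+1}$. The first point to check is that the crossing event is \emph{local}: if $v_0,\dots,v_k$ is a path in $G(\X,1)$ with $v_0\in B_1$ and $v_k\in B_n^c$, and $j$ is minimal with $v_j\in B_n^c$, then $v_0,\dots,v_{j-1}\in B_n$ while $|v_j-v_{j-1}|\le 1$ forces $v_j\in B_{n+1}$; hence the event ``$G(\X,1)$ has a component (equivalently, a path) meeting both $B_1$ and $B_n^c$'' depends on $\X$ only through $\X\cap B_{n+1}$. The same observation shows that for $x\notin B_{n+1}$ adding $x$ to $\Po_{\lambda_0,p,q}$ cannot create such a path, so $A_{x,n,p,q}=\emptyset$ there (which is why the integrals may be taken over $B_{n+1}$), and that for $x\in B_{n+1}$ the event $A_{x,n,p,q}$ is precisely the event that $x$ is pivotal for the crossing event defining $\theta_n(p,q)$, in the sense of adding $x$ to the configuration.

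Next I would set up the conditional Poisson structure. Condition on $\Q_\mu$ and write $U=\{y\in\R^d:\dist(y,\Q_\mu)\le 1/2\}$ for the useful region, so that $F_x=\{x\in U\}$ is $\sigma(\Q_\mu)$-measurable. Given $\Q_\mu$, the restrictions of $\Po_{\lambda_0}$ to $U$ and to $U^c$ are independent Poisson processes; hence, conditionally on $\Q_\mu$ and on the retained useless points, the retained useful points form a homogeneous Poisson process of intensity $\lambda_0 p$ on $U$, and its restriction to $B_{n+1}$ is a Poisson process on the bounded set $D:=U\cap B_{n+1}$. Moreover the crossing event is increasing in the point configuration (adding vertices and edges cannot destroy an existing path), hence increasing in this Poisson process.

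The key ingredient is the intensity-derivative formula: if $\Xi_\rho$ is a Poisson process of intensity $\rho$ on a bounded Borel set $D$ and $E$ is an increasing event, then
$$
\frac{d}{d\rho}\Pr[\Xi_\rho\in E]=\int_D\Pr[\,\Xi_\rho\cup\{x\}\in E,\ \Xi_\rho\notin E\,]\,dx.
$$
I would prove this by conditioning on $\card(\Xi_\rho\cap D)$, which is Poisson with mean $\rho|D|$, writing $\Pr[\Xi_\rho\in E]=\sum_k e^{-\rho|D|}(\rho|D|)^k/k!\;g(k)$ with $g(k)$ the probability that $k$ i.i.d.\ uniform points of $D$ lie in $E$, differentiating term by term (the differentiated series converges uniformly on compact $\rho$-sets since $0\le g\le 1$), observing that $g(k+1)-g(k)$ is the probability that $k+1$ i.i.d.\ uniform points lie in $E$ while the first $k$ do not, and using exchangeability to rewrite $|D|(g(k+1)-g(k))$ as $\int_D\Pr[\{Y_1,\dots,Y_k,x\}\in E,\ \{Y_1,\dots,Y_k\}\notin E]\,dx$ before re-summing. (Equivalently one may cite the Margulis--Russo formula for Poisson processes obtained from the Mecke equation.) Applied with $\rho=\lambda_0 p$ and $E$ the crossing event, for $x\in D$ the integrand equals $\Pr[A_{x,n,p,q}\mid \Q_\mu,\ \text{retained useless points}]$, since for $x\in D\subset B_{n+1}$ the configurations $(\Po_{\lambda_0,p,q}\cup\{x\})\cap B_{n+1}$ and $(\Xi_{\lambda_0 p}\cup\{x\}\cup\text{retained useless})\cap B_{n+1}$ agree and the crossing event is local.

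Finally I would take expectations over $\Q_\mu$ and the retained useless points; the conditional derivative is at most $\lambda_0|D|\le\lambda_0\pi_d(n+1)^d$, so $\partial_p$ and the expectation may be interchanged, giving
$$
\frac{\partial\theta_n(p,q)}{\partial p}=\lambda_0\int_{B_{n+1}}\E\big[\mathbf 1\{x\in U\}\,\Pr[A_{x,n,p,q}\mid\Q_\mu,\text{retained useless}]\big]\,dx=\lambda_0\int_{B_{n+1}}\Pr[A_{x,n,p,q}\cap F_x]\,dx,
$$
the last step using that $\{x\in U\}=F_x$ belongs to the conditioning $\sigma$-field. The identity for $\partial\theta_n/\partial q$ follows symmetrically, conditioning instead on $\Q_\mu$ and on the retained useful points and differentiating in the intensity $\lambda_0 q$ of the retained useless points on $U^c\cap B_{n+1}$, with $F_x$ replaced by $F_x^c$. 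I expect the main obstacle to be the intensity-derivative formula itself together with the measure-theoretic bookkeeping around it: realising the retained useful (resp.\ useless) points as a Poisson process on a bounded window after conditioning, and justifying the term-by-term differentiation and the interchange of derivative and expectation. Once these are in place, identifying the pivotal integrand with $\Pr[A_{x,n,p,q}\mid\cdots]$ is immediate from the locality of the crossing event.
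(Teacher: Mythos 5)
Your proof is correct but takes a genuinely different route from the paper's. The paper conditions on the full $\sigma$-algebra $\mathcal F$ generated by $(\Po_{\lambda_0},\Q_\mu)$, so that the only remaining randomness is a product of independent Bernoulli retention variables; it then applies the discrete Margulis--Russo formula conditionally, controls the one-sided derivatives via a finite-increments bound and dominated convergence, and finally invokes the Mecke equation to convert $\E[N_{n,p,q}]$ (a sum over pivotal Poisson points) into the stated integral over $B_{n+1}$. You instead condition only on $\Q_\mu$ and the retained useless points, observe that the retained useful points then form a Poisson process of intensity $\lambda_0 p$ on $U\cap B_{n+1}$, and apply the Poisson-process intensity-derivative formula directly to this bounded-window Poisson process, proving that formula by term-by-term differentiation of the Poisson mass function and an exchangeability argument. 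In effect your approach packages the paper's two ingredients (discrete Russo plus Mecke) into one continuous Russo formula, and you also supply the locality observation (the crossing event depends only on the configuration in $B_{n+1}$, and $A_{x,n,p,q}=\emptyset$ for $x\notin B_{n+1}$) more explicitly than the paper does. The paper's conditioning-on-everything route makes the derivative-and-expectation interchange very concrete via the pointwise bound $h\,\#(\Po_{\lambda_0}\cap B_{n+1})$; your route achieves the same with the bound $\lambda_0|D|\le\lambda_0\pi_d(n+1)^d$ on the conditional derivative. Both arguments are sound, and both correctly exploit that $F_x=\{x\in U\}$ is measurable with respect to the conditioning $\sigma$-field so that $\E[\mathbf 1_{F_x}\,\Pr(A_{x,n,p,q}\mid\cdot)]=\Pr(A_{x,n,p,q}\cap F_x)$; the $\partial_q$ case is symmetric as you note.
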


\begin{proof}
Adapting the proof of Lemma 1 in \cite{FPR0},
 we prove only the first identity since the second is obtained in
 exactly the same way. We denote by $\mathcal{F}$ the $\sigma$-algebra generated by $(\Po_{\lambda_0},\Q_\mu)$. In particular $\mathcal{F}$ does not contain any information on the thinning procedures for the useful and useless vertices. We denote by $A_n$ the event that there exists a path
in $G(\Po_{\lambda_0,p,q},1)$ from $B_1$ to outside $B_n$. Let us note that the distribution of $\Po_{\lambda_0,p,q}$, given the $\sigma$-algebra $\mathcal{F}$, consists of a collection of independent Bernoulli variables which
 indicate whether the vertices are retained or removed by the 
thinning procedure. Then, applying the standard coupling of
 Bernoulli variables and Russo's formula 
 (also attributed to Margulis but in fact dating back at least to 
\cite[eqn (5.2)]{EP}),
 we obtain for any $h\in (0,1-p]$ that

$$ 0\le \Pr(  \Po_{\lambda_0,p+h,q}\in A_n |\mathcal{F})-\Pr(  \Po_{\lambda_0,p,q}\in A_n |\mathcal{F}) \le h \#(\Po_{\lambda_0}\cap B_{n+1}) $$

and 

$$
 \lim_{h \downarrow 0}
 \frac 1h\Big(\Pr(  \Po_{\lambda_0,p+h,q}\in A_n |\mathcal{F})-\Pr(  \Po_{\lambda_0,p,q}\in A_n |\mathcal{F})\Big)=\E[N_{n,p,q}|\mathcal{F}],
$$
where $N_{n,p,q}$ is the number of useful vertices in $\Po_{\lambda_0}$
that are pivotal for the occurrence of $A_n$ (with the $(p,q)$-thinning
applied to all of the vertices of $\Po_{\lambda_0}$ except for
the one being counted as pivotal). Recall that
 a vertex $x$  in a configuration $\X$ is said to be pivotal for an
 increasing event $\A$ if $\X$ belongs to $\A$ whereas $\X\backslash\{x\}$ does not. 

By the dominated convergence theorem we obtain

\begin{eqnarray*}
 \frac{\partial^+ \theta_n(p,q)}{\partial p} & =& \lim_{h\downarrow 0}
 \frac 1h \E\Big[\Pr(  \Po_{\lambda_0,p+h,q}\in A_n |\mathcal{F})-\Pr(  \Po_{\lambda_0,p,q}\in A_n |\mathcal{F})\Big]\\
 & = &  \E\Big[\lim_{h\downarrow 0}
 \frac 1h\Big[\Pr(  \Po_{\lambda_0,p+h,q}\in A_n |\mathcal{F})-\Pr(  \Po_{\lambda_0,p,q}\in A_n |\mathcal{F})\Big]\Big]\\
 & =& \E [ \E[N_{n,p,q}|\mathcal{F}]]=\E [N_{n,p,q}].
\end{eqnarray*}
By the Mecke formula (see \cite{LP}) it follows that
\begin{eqnarray*}
\E[ N_{n,p,q}]& = &\E\left[ \sum_{x\in\Po_{\lambda_0}} 
{\bf 1}_{\{x 
\text{ is useful and  pivotal for } A_n\}}\right]\\
& = & \int_{B_{n+1}} \Pr[ A_{x,n,p,q} \cap F_x ]   \lambda_0 dx.
\end{eqnarray*}
One may argue similarly for the left derivative.
The lemma is proved. 
\end{proof}

Now we want to apply enhancement arguments as in  \cite{FPR0,FPR}. For this we need to control the ratio between $\frac{\partial \theta_n(p,q)}{\partial q} $ and $\frac{\partial \theta_n(p,q)}{\partial p}$. The crucial lemma is given here.

\begin{lemm}\label{lemmegeometric}
Given $\alpha>0$ and $\lambda_0>0$, there exists a constant $c>0$ such that for any $p\ge \alpha$, $q\ge \alpha$, $\mu >0$, $n\ge c^{-1}$ and $x\in B_{n+1}$,
 \bea
\frac{
\Pr[A_{x,n,p,q}\cap F_x^c ]}{  
\Pr[A_{x,n,p,q} \cap F_x]} 
\geq ce^{-\mu/c}.
\label{pivratio}
\eea   
\end{lemm}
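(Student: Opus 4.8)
The plan is to bound $\Pr[A_{x,n,p,q}\cap F_x^c]$ from below by forcing a local configuration of $\Q_\mu$-points that makes $x$ useless, and to bound $\Pr[A_{x,n,p,q}\cap F_x]$ from above trivially by $\Pr[A_{x,n,p,q}]$; the ratio will then be controlled by the probability of that local event, which costs a factor $e^{-\mu/c}$. More precisely, let $D$ be the ball of radius $1$ centred at $x$. The key observation is that the event $A_{x,n,p,q}$ — that adding the vertex $x$ creates a $B_1$-to-$B_n^c$ path in $G(\Po_{\lambda_0,p,q}\cup\{x\},1)$ which was not already present — depends on $\Po_{\lambda_0}$ only through the points at distance $\le 1$ from $x$ (its potential neighbours) together with the rest of the configuration, and, crucially, it does not depend at all on $\Q_\mu$ except through which $\Po_{\lambda_0}$-points are declared useful or useless. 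Write $A_{x,n,p,q}$ in terms of the thinned process $\Po_{\lambda_0,p,q}$ and the useful/useless labelling. The event $F_x^c$ (that $x$ is useless, i.e. $\Q_\mu$ has no point within $1/2$ of $x$) is measurable with respect to $\Q_\mu$ restricted to the ball $B(x,1/2)$.

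The first main step is a conditioning argument. Condition on $\Po_{\lambda_0}$ and on $\Q_\mu\setminus B(x,1/2)$. Given this, the event $F_x^c$ depends only on $\Q_\mu\cap B(x,1/2)$, which is an independent Poisson process on that ball, so $\Pr[F_x^c\mid \Po_{\lambda_0},\,\Q_\mu\setminus B(x,1/2)] = e^{-\mu\pi_d(1/2)^d}$, a constant $\rho(\mu)\ge c e^{-\mu/c}$ for a suitable $c=c(d)$. The difficulty is that the occurrence of $A_{x,n,p,q}$ is \emph{not} independent of $F_x^c$: removing the $\Q_\mu$-points near $x$ changes the useful/useless status of the $\Po_{\lambda_0}$-points within distance $1/2$ of $\Q_\mu$-points in $B(x,1/2)$, hence changes which of them survive the $(p,q)$-thinning and thus changes the graph $G(\Po_{\lambda_0,p,q},1)$ itself. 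So I cannot simply factorize. Instead I will use a resampling/comparison argument: on the event $A_{x,n,p,q}\cap F_x$, perform surgery by (i) deleting all $\Q_\mu$-points in $B(x,1/2)$ — this occurs with conditional probability $\rho(\mu)$ given the rest — and simultaneously (ii) resampling the thinning labels of the finitely many $\Po_{\lambda_0}$-points that lie within distance $1/2$ of some deleted $\Q_\mu$-point (call this set $W$; note $\card(W)$ is a Poisson-dominated random variable with parameter $\le \lambda_0\pi_d$) so that every point of $W$ is \emph{retained}. Retention of a useless point happens with probability $q\ge\alpha$ and of a useful point with probability $p\ge\alpha$, so this resampling has conditional probability $\ge \alpha^{\card(W)}$. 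After this surgery $x$ is useless, and the graph $G(\Po_{\lambda_0,p,q},1)$ can only have \emph{gained} edges among $W\cup(\text{neighbours})$, since it retains at least as many vertices as before (those of $W$ that had previously been discarded are now present, and no vertex is lost because we only delete $\Q_\mu$-points, not $\Po_{\lambda_0}$-points, and we force retention). Hence the $B_1$-to-$B_n^c$ path witnessing $A_{x,n,p,q}$ in the original configuration, which uses $\{x\}$ and survives in $G(\Po_{\lambda_0,p,q}\cup\{x\},1)$, still survives; and it is still absent from $G(\Po_{\lambda_0,p,q},1)$ \emph{unless} the added vertices of $W$ themselves created such a path. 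This last point needs care: I will choose the surgery so that $W$-vertices are added, then observe that if $G(\Po_{\lambda_0,p,q},1)$ already had a $B_1$-to-$B_n^c$ path after the surgery, then $x$ was not pivotal after the surgery, so we land outside $A_{x,n,p,q}$ — this is a genuine loss. To handle it, note that a $W$-created path would have to pass through $B(x,1)$, so the original path through $x$ can be rerouted through $W$; but this is exactly the pivotality condition, and I expect one can absorb it, possibly at the cost of replacing "$x$ pivotal" by "$x$ or a $W$-point pivotal", which only changes constants since $\card(W)$ is tight.

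The second main step is to assemble the bound. Writing $\mathcal{G}$ for the $\sigma$-algebra generated by $\Po_{\lambda_0}$ and $\Q_\mu\setminus B(x,1/2)$, the surgery shows
$$
\Pr[A_{x,n,p,q}\cap F_x^c\mid\mathcal{G}]\ \ge\ \rho(\mu)\,\alpha^{\card(W)}\,\1_{A'_{x,n,p,q}},
$$
where $A'_{x,n,p,q}$ is (essentially) the post-surgery analogue of $A_{x,n,p,q}$, and a symmetric/easier argument shows $\Pr[A_{x,n,p,q}\cap F_x\mid\mathcal{G}]\le\Pr[A_{x,n,p,q}\mid\mathcal{G}]\le C\,\1_{A''}$ where $A''$ differs from $A'$ again only through the finitely many points of $W$ and the $\Q_\mu$-points of $B(x,1/2)$. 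Taking expectations and using that $\card(W)$ is stochastically dominated by $\mathrm{Po}(\lambda_0\pi_d)$ independently of the path structure far from $x$ (spatial independence of the Poisson process), the factor $\E[\alpha^{\card(W)}]\ge \exp(-\lambda_0\pi_d(1-\alpha))>0$ is a constant depending only on $d,\lambda_0,\alpha$. Dividing, every factor except $\rho(\mu)\ge c e^{-\mu/c}$ is a constant, giving \eqref{pivratio}. The restriction $n\ge c^{-1}$ is merely to ensure $B_1$ and $B_n^c$ are disjoint and the events are non-degenerate.

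\textbf{Main obstacle.} The delicate point is step one: controlling the \emph{pivotality} (non-occurrence of $A_n$ in $G(\Po_{\lambda_0,p,q},1)$) through the surgery, since adding the forced-retained vertices of $W$ could in principle create a $B_1$-to-$B_n^c$ path on their own and destroy pivotality of $x$. I expect this is handled by a rerouting argument local to $B(x,1)$ — any such $W$-path is confined near $x$ and can be spliced with the existing near-$x$ portion of the original $x$-path — together with the observation that it suffices to prove the ratio bound with "$x$ pivotal" relaxed to "some vertex in $\Po_{\lambda_0}\cap B(x,1)$ pivotal", because in the application (the enhancement argument of \cite{FPR0,FPR}) only the aggregate derivative $\partial\theta_n/\partial p$ over a neighbourhood matters, and the number of such vertices is tight.
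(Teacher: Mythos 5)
Your overall strategy — buying the useless-ness of $x$ at exponential cost $e^{-\mu\pi_d 2^{-d}}$ and controlling the damage to the thinning — is different from what the paper does, and the obstacle you flag at the end is in fact a genuine, unresolved gap, not a technicality.

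The problem with your surgery is exactly the one you name: after deleting $\Q_\mu\cap B(x,1/2)$ and forcing retention of every point of $W$, the retained set $\Po_{\lambda_0,p,q}$ only grows, so while the witnessing $B_1$-to-$B_n^c$ path through $x$ survives, a new $B_1$-to-$B_n^c$ path avoiding $x$ may appear through the newly added $W$-points, destroying pivotality and sending the configuration outside $A_{x,n,p,q}$. Your proposed repairs do not close this. A ``rerouting'' local to $B(x,1)$ cannot help: if the post-surgery graph already contains a $B_1$-to-$B_n^c$ path avoiding $x$, no splicing makes $x$ pivotal again. Relaxing ``$x$ pivotal'' to ``some $\Po_{\lambda_0}$-point in $B(x,1)$ pivotal'' changes the statement of the lemma and is not compatible with Lemma~\ref{lemmeRusso} as written, where the Mecke/Russo computation of $\partial\theta_n/\partial p$ produces precisely $\int_{B_{n+1}}\Pr[A_{x,n,p,q}\cap F_x]\,\lambda_0\,dx$ with $x$ itself pivotal; using a relaxed notion on the $F_x^c$ side but the original one on the $F_x$ side does not yield \eqref{pivratio}. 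There is also a formal problem: you write the lower bound as $\rho(\mu)\alpha^{\card(W)}\1_{A'_{x,n,p,q}}$ with both sides ``conditional on $\mathcal{G}$'', but $A'$ (the post-surgery event) is not $\mathcal{G}$-measurable, since it still depends on the thinning labels that have not been revealed; the displayed inequality as stated does not parse as a conditional expectation bound.

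The paper's proof avoids this entirely by never trying to transport a realization of $A_{x,n,p,q}\cap F_x$ across the surgery. Instead it lower-bounds $\Pr[F_x^c\mid A_{x,n,p,q}]\ge\Pr[F_x^c\cap A_{x,n,p,q}\mid E]$ where $E\supset A_{x,n,p,q}$ is a $\sigma$-algebra-adapted event built by revealing $\Po_{\lambda_0,p,q}$ outside $B_R(x)$ and $\Q_\mu$ outside $B_{1/2}(x)$, identifying the clusters $V$ (to $B_1$) and $T$ (to $B_n^c$) and their first entry points $X,Y$ into $B_R(x)$. Conditionally on $E$, the argument then \emph{constructs} a configuration realizing $A_{x,n,p,q}\cap F_x^c$: using the geometric Lemmas~\ref{lemmegeo1}--\ref{lemmegeo2} to place two prescribed chains of small balls $B_\delta(x_i)$, $B_\delta(y_i)$ of step size $<1-2\delta$ running from $X$ and $Y$ to $x$ while remaining mutually separated by more than $1+2\delta$, it demands exactly one retained $\Po_{\lambda_0}$-point in each such ball, \emph{no other} $\Po_{\lambda_0}$-points in $B_R(x)$, and $\Q_\mu\cap B_{1/2}(x)=\emptyset$. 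Because the conditional configuration inside $B_R(x)$ is completely specified and contains no extraneous points, there is no possibility of a spurious $B_1$-to-$B_n^c$ connection avoiding $x$; pivotality is automatic by construction, not something to be preserved through a perturbation. The price of this construction is a constant (depending on $\lambda_0,R,\delta,\alpha$) times $e^{-\mu\pi_d 2^{-d}}$, exactly as in your $\rho(\mu)$ factor. So your diagnosis of where the $e^{-\mu/c}$ comes from is correct, but the mechanism for controlling pivotality is missing, and the paper uses a constructive staged-revelation argument precisely to make that issue vanish. If you want to salvage the surgery route you would need a genuinely new idea to ensure the post-surgery graph has no $B_1$-to-$B_n^c$ path off $x$, and it is not clear one exists without essentially rebuilding the paper's construction.
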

The proof is based on geometrical arguments, and 
 is given at the end of the section.

\begin{proof}[Proof of Theorem \ref{mainthm}]
Set $\lambda_c := \lambda_c(1)$ and $p_c := \lambda_c/\lambda_0$.
Choose $\alpha := p_c/2$.
Then 
by Lemmas \ref{lemmeRusso} and \ref{lemmegeometric},
there exists $c >0$ such that for any 
$p,q \in (\alpha,1)$, any $\mu >0$,
 any $n\ge c^{-1}$ and any $x\in B_{n+1}$ we have
\bea
\frac{\partial \theta_n(p,q)}{\partial q} 
\geq  \left( c e^{-\mu/c} \right) 
\frac{\partial \theta_n(p,q)}{\partial p} .
\label{partialratio}
\eea
Let $\delta >0$ be  small enough so that
$p_c+ \delta <1$ and $p_c - \delta(1+ (2/c) e^{\mu/c}) \geq \alpha$,
 that is,
 $\delta(1+(2/c)e^{\mu/c})\le p_c/2$. 
There exists $c'>0$ such that the choice
\bea
 \delta=c'e^{-\mu/c'}
\label{deltamu}
\eea
is suitable, for all $\mu >0$. 
By the finite-increments formula on the
segment $[(p_c-\delta,p_c-\delta) ; 
(p_c+\delta,p_c-\delta(1+\frac 2c e^{\mu/c})]$ 
and inequality \eqref{partialratio},
 $$
\theta_n(p_c+ \delta,p_c-\delta(1+\frac 2c e^{\mu/c})) 
\leq \theta_n(p_c-\delta,p_c- \delta).
$$
By passing to the limit $n \to +\infty$ and noting that $\theta(p_c- \delta,p_c-\delta) =0$ we obtain 
 $$
\theta(p_c+\delta,0)\le \theta(p_c+ \delta,p_c-\delta(1+\frac 2c e^{\mu/c})) 
\leq \theta(p_c-\delta,p_c- \delta)=0,
$$
so $G(\Po_{\lambda_0,p_c + \delta,0},1 ) \notin \I $ almost surely
and hence  $G(\Po_{\lambda_c + \delta \lambda_0},\Q_\mu, \frac 12) \notin \I$
almost surely. 

In conclusion, setting $\lambda:=\lambda_c + \delta \lambda_0$ 
with $\delta$ given by (\ref{deltamu}), 
we have $G(\Po_{\lambda},\Q_\mu,\frac 12) \notin \I$ almost surely.
Hence 
$$
\mu_c(\frac 12, \lambda )  \geq \mu = c' \log \left( \frac{\lambda_0 c'}{ \lambda - \lambda_c} \right),
$$ 
so  the theorem is proved.
\end{proof}

It remains now to show Lemma \ref{lemmegeometric}. 
 Let us first give two geometrical lemmas.
 We denote by $B_k(x)$ the translated ball $x+B_k$ and for
 each point $a\in \R^d$ and 
subset $S\subset \R^d$, we denote by $d(a,S)$ the Euclidean distance from $a$ to $S$.

\begin{figure}[!ht]
\begin{center}
\psfrag{X}{\small$x$}
\psfrag{Y}{\small$y$}
\psfrag{R}{\small$R$}
\psfrag{Rr}{\small$R+r$}
\psfrag{Rmr}{\small$R-r$}
\psfrag{Xp}{\small$x'$}
\psfrag{Yp}{\small$y'$}
\psfrag{B1x}{\small$B_1(x)$}
\psfrag{Bdelta}{\small$B_\delta(x')$}
\includegraphics[width=12cm,height=5cm]{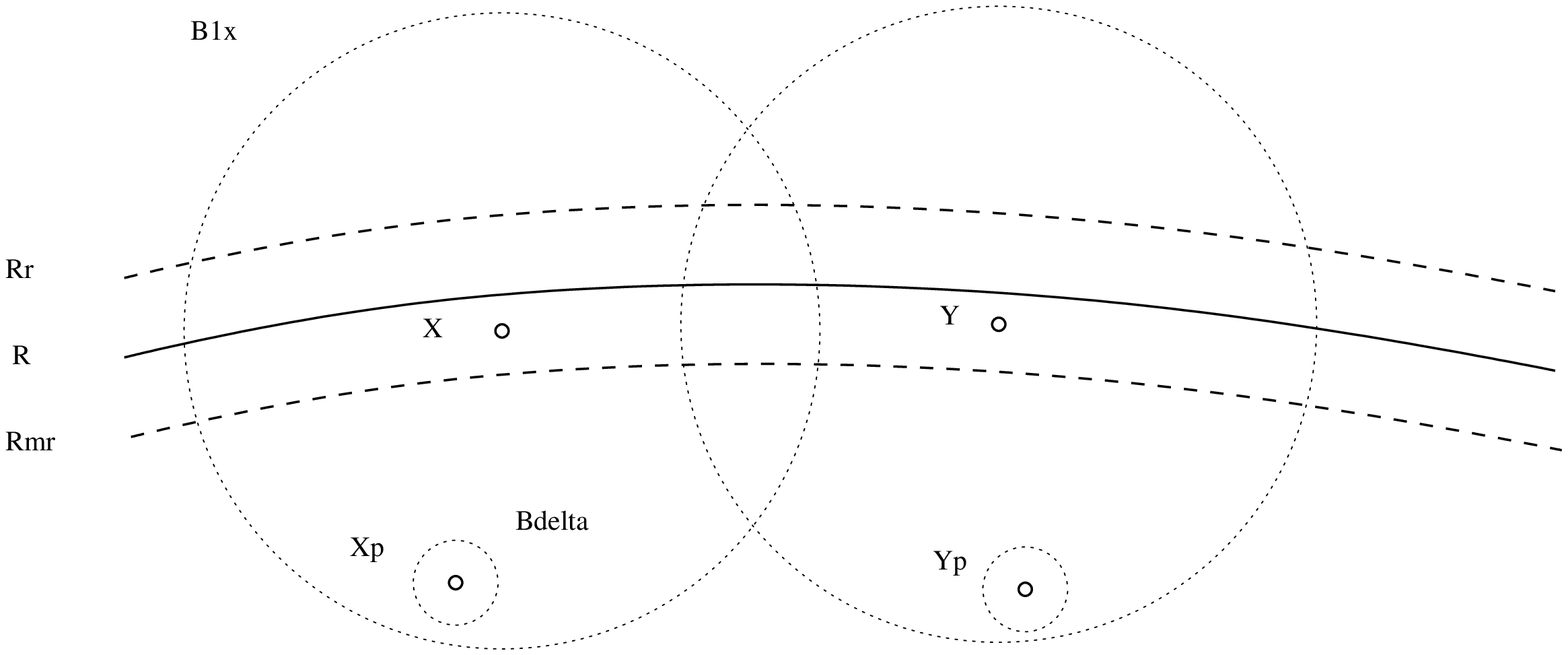}
\end{center}
\caption{An example of configuration $x$, $y$, $x'$ and $y'$ in Lemma \ref{lemmegeo1}.}
\end{figure}

\begin{lemm}\label{lemmegeo1}
There exists $K > 10$ such that
for all $R>K$ and  all  $r, \delta \in (0,1/K) $, we have:
 for all $x,y \in B_{R+r}(0)\backslash B_{R-r}(0)$ with $|x-y|>1$   there exist $x',y'\in B(0,R-1/2)$ satisfying 
\begin{itemize}
\item i) $|x-x'|\le 1-\delta$;
\item ii) $|y-y'|\le 1-\delta$;
\item iii) $|x'-y'|\ge 1+2\delta$;
\item iv) $d(x',B_R(0)^c\backslash B_1(x))\ge 1+\delta$;
\item v) $d(y',B_R(0)^c\backslash B_1(y))\ge 1+\delta$.
\end{itemize}
\end{lemm}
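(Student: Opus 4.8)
The plan is to construct $x'$ and $y'$ explicitly by moving $x$ and $y$ radially inward toward the origin and, if necessary, slightly perturbing them tangentially to guarantee the separation in (iii). First I would fix a large parameter $K$ (to be chosen) and work with $R > K$, $r,\delta \in (0,1/K)$. Since $x,y$ lie in the thin annulus $B_{R+r}(0) \setminus B_{R-r}(0)$, their radial coordinates are within $2r$ of $R$; write $x = \rho_x u$, $y = \rho_y v$ with $u,v$ unit vectors and $\rho_x,\rho_y \in [R-r, R+r]$. The natural candidates are $x' = (R - 1/2 - \delta) u$ and $y' = (R - 1/2 - \delta) v$, which automatically lie in $B(0, R - 1/2)$.

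The verification of each item is then a routine distance estimate. For (i): $|x - x'| = |\rho_x - (R - 1/2 - \delta)| \le r + 1/2 + \delta$, which is $\le 1 - \delta$ provided $r + 2\delta \le 1/2$, i.e. as soon as $K \ge 8$ or so; item (ii) is identical. For (iv) and (v): any point $z \in B_R(0)^c$ has $|z| \ge R$, while $|x'| = R - 1/2 - \delta$, so $|x' - z| \ge |z| - |x'| \ge 1/2 + \delta$ for all such $z$; but we need a gap of $1 + \delta$, and this is where the exclusion of $B_1(x)$ matters. If $z \notin B_1(x)$ then $|z - x| > 1$; combining $|z - x| > 1$ with $|x - x'| \le 1 - \delta$ and with $|z| \ge R$, $|x'| \le R - 1/2 - \delta$, a short two-dimensional geometric argument (looking at the plane containing $0$, $x$, $z$, using that $x$ and $x'$ are nearly collinear with the origin and $z$ is outside radius $R$) gives $d(x', B_R(0)^c \setminus B_1(x)) \ge 1 + \delta$ once $K$ is large enough; one quantifies this by noting that the constraint $|z| \ge R$ forces $z$ to be "far around the sphere" from $x'$ unless it is close to $x$, and $|z - x| > 1$ rules out the latter. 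The same reasoning handles (v).

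The main obstacle is item (iii): a priori the construction above only guarantees $|x' - y'| = (R - 1/2 - \delta)|u - v|$, and while $|x - y| > 1$ forces $|u - v|$ to be bounded below by something of order $1/R$, multiplying by $R - 1/2 - \delta$ recovers a separation that is close to (but not obviously at least) $1 + 2\delta$; indeed the chord $|x-y|$ could be just barely above $1$ while $x,y$ sit at radius $R - r$, making $|x'-y'|$ slightly \emph{less} than $1$. To fix this I would, in the borderline case, not project both points to the same radius but instead first move $x$ and $y$ radially to radius exactly $R - 1/2 - \delta$ and then push them \emph{apart} along the tangential direction (the direction of $u - v$, or a fixed orthogonal direction if $u,v$ are nearly antipodal) by a controlled amount; since items (i), (ii), (iv), (v) all have slack of order a constant (not order $\delta$) while (iii) only needs slack $3\delta$, there is room to absorb a tangential displacement of size, say, $5\delta$ without violating the other constraints, provided $\delta < 1/K$ with $K$ large. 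Carrying this out amounts to checking that the feasible region defined by (i)–(v) is nonempty, which by the above has nonempty interior of size bounded below uniformly in $R > K$, $r,\delta < 1/K$; choosing $K$ large enough to make all the constant-order slacks positive completes the proof.
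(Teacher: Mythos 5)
Your overall plan --- move the points radially inward and then, if needed, push them tangentially apart --- is essentially the paper's own strategy, which sets $x' = x - (1-3\delta)x/|x| + 2\delta(x-y)/|x-y|$ and symmetrically for $y'$: a radial inward shift of fixed \emph{distance} $1-3\delta$ together with a chord-direction push of $2\delta$. Your version projects to a fixed \emph{radius} $R - 1/2 - \delta$ instead, which places $x'$ much closer to the boundary sphere (radial gap $1/2 + \delta$ rather than $\approx 1 - 3\delta$). That still satisfies (iv), but only because the excluded ball $B_1(x)$ pushes the nearest admissible $z$ away; the resulting slack is roughly $\sqrt{5}/2 - 1 \approx 0.12$, versus roughly $\sqrt 2 - 1 \approx 0.41$ in the paper's construction, so your (iv) is tighter.

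The genuine gap is in the fix for (iii). You correctly note that projecting to radius $R - 1/2 - \delta$ can shrink $|x-y|$ below $1$: indeed the radial-only candidates satisfy $|x'-y'| = (R-1/2-\delta)|u-v| \ge \frac{R - 1/2 - \delta}{R+r}\,|x-y|$, and that multiplier is $1 - \Theta(1/R)$. So when $|x-y| \downarrow 1$, the radial-only $|x'-y'|$ can be as low as $1 - c_1/R$ for a constant $c_1 \approx 1/2$, and the deficit to reach $1 + 2\delta$ is $2\delta + c_1/R$, which is of order $1/R$, \emph{not} of order $\delta$. Your claim that ``(iii) only needs slack $3\delta$'' and that a tangential push of size $5\delta$ suffices is therefore wrong in the regime $\delta \ll 1/R$: pushing each point by $5\delta$ increases $|x'-y'|$ by $10\delta$, which does not close a deficit of order $1/R$ when $\delta$ is very small. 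Since the lemma quantifies over all $\delta \in (0,1/K)$ with $K$ fixed and $R > K$ arbitrary, this regime is allowed. The repair is easy and in the spirit of what you wrote: take the tangential push to be of size $\Theta(1/R) + O(\delta)$ (for instance some fixed multiple of $1/K$), which the constant-order slack in (i), (ii), (iv), (v) can absorb; you should then verify those four items with the larger push, which goes through with room to spare, and for (iii) one now genuinely beats the $O(1/R)$ shrinkage. As a minor aside, your worry about $u,v$ being ``nearly antipodal'' is moot: in that case $|u-v|$ is bounded away from $0$ and (iii) holds trivially, so the only delicate regime is $|x-y|$ barely above $1$, where $u-v$ is a perfectly good direction.
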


\begin{proof}
Fix $\eps \in (0,1)$.
Given $x,y \in B_{R+r}(0)\backslash B_{R-r}(0)$ with $|x-y|>1$,
 set 
\bean
x' :=x-(1-3\delta)x/|x|+2\delta (x-y)/|x-y|;
\\
 y' :=y-(1-3\delta)y/|y|+2\delta (y-x)/|x-y| . 
\eean
It is clear that for $R >10$ and $r$ and $\delta$ small enough  $x',y'\in B(0,R-1/2)$. Moreover items {\it i)} and {\it ii)} are trivially true by triangle inequality.
Also, provided $R$ is large enough we have $|x'-y'|\ge 1+3\delta $. 
Finally, provided $R$ is large enough and $r,\delta$ small enough,
 the distances $d(x',B_R(0)^c\backslash B_1(x))$ and 
$d(y',B_R(0)^c\backslash B_1(y))$ are bounded below by 
$\sqrt{2 - \eps}$. Therefore
choosing $R$ large enough and $r$, $\delta$ small enough, the
 last three items are satisfied.
\end{proof}

\begin{figure}[!ht]
\begin{center}
\psfrag{X}{\small$x$}
\psfrag{Y}{\small$y$}
\psfrag{R}{\small$R$}
\psfrag{Rr}{\small$R+r$}
\psfrag{Rmr}{\small$R-r$}
\psfrag{Xp}{\small$x'$}
\psfrag{Yp}{\small$y'$}
\psfrag{B1x}{\small$B_1(x)$}
\psfrag{Bdelta}{\small$B_\delta(y')$}
\includegraphics[width=12cm,height=6.3cm]{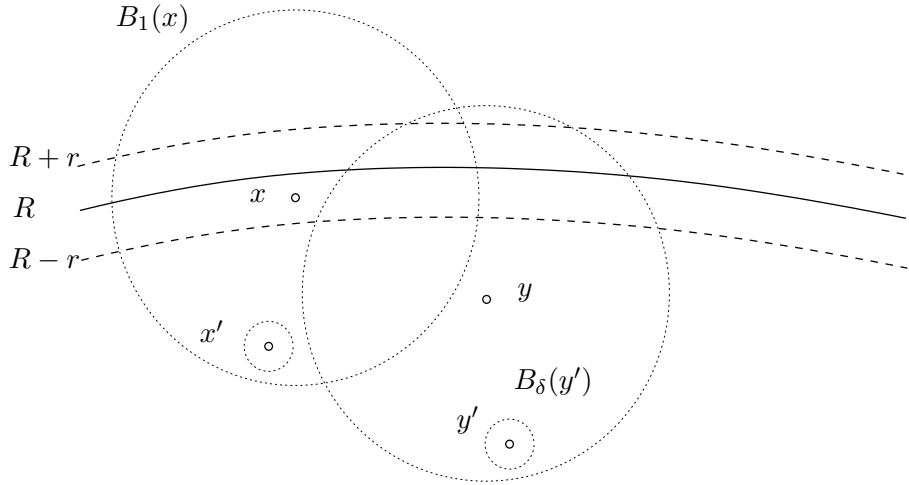}
\end{center}
\caption{An example of configuration $x$, $y$, $x'$ and $y'$ in Lemma \ref{lemmegeo2}.}
\end{figure}

\begin{lemm}\label{lemmegeo2}
Let $r>0$. Then there exists $K' >10$ such that for any $R>K'$
and any $\delta \in (0,1/K')$ we have: for all $x \in B_{R}(0)\backslash B_{R-r}(0)$ and $y \in B_{R-r}(0)$ with $|x-y|>1$ and $d(y,B_{R+r}(0)\backslash B_1(x))\le 1$, there exist $x',y'\in B(0,R-1/2)$ satisfying 
\begin{itemize}
\item i) $|x-x'|\le 1-\delta$;
\item ii) $|y-y'|\le 1-\delta$;
\item iii) $|x'-y'|\ge 1+2\delta$;
\item iv) $|x'-y|\ge 1+\delta$;
\item v) $|y'-x|\ge 1+\delta$;
\item vi) $d(y',B_R(0)^c\backslash B_1(y))\ge 1+\delta$;
\item vii) $d(x',B_{R+r}(0)^c\backslash B_1(x))\ge 1+\delta$.
\end{itemize}
\end{lemm}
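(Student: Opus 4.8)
The plan is to mimic the construction used in the proof of Lemma \ref{lemmegeo1}, pushing $x$ and $y$ slightly inward along their radial directions while also separating them tangentially, and then to verify the seven conditions by elementary Euclidean geometry in the large-$R$ limit. Concretely, I would set
$$
x' := x - (1-3\delta)\frac{x}{|x|} + 2\delta\,\frac{x-y}{|x-y|},
\qquad
y' := y - (1-3\delta)\frac{y}{|y|} + 2\delta\,\frac{y-x}{|x-y|},
$$
exactly as before. Conditions i) and ii) follow immediately from the triangle inequality (each correction has total length at most $1-3\delta+2\delta = 1-\delta$), and these do not require $R$ large. Condition iii) is also the same computation as in Lemma \ref{lemmegeo1}: the radial corrections on $x$ and $y$ point in nearly opposite directions when $R$ is large (since $x/|x|$ and $y/|y|$ are nearly equal once $|x-y|$ is bounded and $|x|,|y| \approx R$), so the tangential $2\delta$-separations add up and $|x'-y'| \ge 1+3\delta \ge 1+2\delta$ for $R$ large enough; that $x',y' \in B(0,R-1/2)$ also follows since each point has norm roughly $R - (1-3\delta) < R - 1/2$ for $\delta$ small.

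The genuinely new conditions are iv), v), vi), vii), and here the hypotheses $x \in B_R(0)\setminus B_{R-r}(0)$, $y \in B_{R-r}(0)$, $|x-y|>1$ and $d(y,B_{R+r}(0)\setminus B_1(x)) \le 1$ enter. The last hypothesis forces $y$ to lie close to the "lens" $B_{R+r}(0) \cap B_1(x)$, hence in particular $|x-y|$ is not much larger than $1$: writing $|x-y| = 1 + \eta$ with $\eta \ge 0$, the condition $d(y,B_{R+r}(0)\setminus B_1(x)) \le 1$ combined with $x$ being within distance $r$ of the sphere of radius $R$ pins down $\eta = O(r)$. I would exploit this to control iv) and v): $x'$ differs from $x$ by a vector of length $\le 1-\delta$, while $y$ is at distance $> 1$ from $x$; a careful bookkeeping of the directions shows $|x'-y| \ge 1 + \delta$ once $R$ is large, because the dominant correction to $x$ is the inward radial shift, which moves $x$ \emph{away} from $y$ (as $y$ is on the inner side), and the tangential $2\delta$-shift is controlled. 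The symmetric argument, using $|y'-x|$, is where the bound $|x-y|>1$ together with the near-tangency is used again — note $y' $ is shifted by $2\delta(y-x)/|x-y|$, i.e. further from $x$, plus an inward radial shift of $y$ which for $R$ large is nearly parallel to the inward radial shift of $x$ and again does not decrease the distance to $x$ by more than $o(1)$. Finally vi) and vii) are the analogues of iv) and v) of Lemma \ref{lemmegeo1}: one checks that $x'$ (resp. $y'$) has been pulled far enough inside the sphere of radius $R+r$ (resp. $R$) that any point of the complement which is \emph{not} already in the relevant unit ball $B_1(x)$ (resp. $B_1(y)$) is at distance $\ge 1+\delta$; in the flat-space limit $R \to \infty$ these distances tend to $\sqrt{2}-o(1) > 1+\delta$, exactly as in the previous lemma.

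The main obstacle, and the step I would spend the most care on, is conditions iv) and v), since unlike everything else they are not a direct transcription of Lemma \ref{lemmegeo1} and they genuinely require the hypothesis $d(y,B_{R+r}(0)\setminus B_1(x)) \le 1$ to rule out the bad case where $y$ sits almost diametrically opposite the inward push. I would handle this by introducing an auxiliary parameter $\eps \in (0,1)$ (as in Lemma \ref{lemmegeo1}), proving all the "$\ge 1+\delta$" and "$\ge 1+2\delta$" bounds with slack — e.g. showing the relevant distances exceed $1 + 3\delta$ or $\sqrt{2}-\eps$ — and then absorbing the error terms by first fixing $\eps$ small, then taking $R$ large (this determines $K'$) and finally $\delta < 1/K'$ small. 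Everything is uniform in the admissible $x,y$ because all the estimates depend only on $R$, $r$, $\delta$ and on the pinned-down quantity $|x-y| = 1 + O(r)$, not on the specific configuration.
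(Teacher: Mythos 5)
There is a genuine gap. You reuse the construction from Lemma \ref{lemmegeo1}, namely
$x' = x - (1-3\delta)\,x/|x| + 2\delta\,(x-y)/|x-y|$, but the paper does something different here, and for good reason: that construction does not satisfy iv) and v) across the whole range of configurations permitted by the hypotheses. Writing $\rho = |x-y|$ and letting $\theta$ be the angle between $y-x$ and the tangent hyperplane to the sphere $\partial B_{|x|}(0)$ at $x$, a short computation with your $x'$ gives
\[
|x'-y|^2 = (\rho+2\delta)^2 + (1-3\delta)^2 - 2(\rho+2\delta)(1-3\delta)\sin\theta,
\]
so the requirement $|x'-y|\ge 1+\delta$ at $\rho$ near $1$ and $\delta$ small forces $\sin\theta \lesssim 1/2$, i.e.\ $\theta\lesssim \pi/6$. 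But the hypotheses of the lemma only constrain $\theta$ to be at most about $\pi/3$ (this is exactly what the paper's proof establishes and exploits). So your construction breaks down whenever $\theta\in(\pi/6,\pi/3)$, which is an allowed regime. Relatedly, your informal justification for iv) --- ``the inward radial shift \ldots moves $x$ \emph{away} from $y$ (as $y$ is on the inner side)'' --- has the geometry backwards: when $y$ has a substantial inward radial component relative to $x$, pushing $x$ radially inward moves it \emph{towards} $y$, which is precisely why this case is dangerous.

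The paper handles this with a different construction. It first reduces (WLOG) to $|x-y|<2$, observes that the hypotheses force $y-x$ to make an angle at most $\pi/3+\eps$ with the tangent hyperplane at $x$ (for a small fixed $\eps$), and then sets $x'=x+(1-3\delta)v$, $y'=y+(1-3\delta)u$ where $u,v$ are unit vectors in the plane of $x,y$ making angles $\pi/3+\eps$ and $\pi/3+2\eps$ with $y-x$ respectively and pointing inward. The key feature is that $x'$ and $y'$ are displaced along directions chosen relative to $y-x$ rather than purely radially: this guarantees both that the displacement has a large enough inward component (so $x',y'\in B_{R-1/2}(0)$ and vi), vii) hold) and, via the near-parallelogram $x,y,y',x'$, that the two diagonals $|x'-y|$ and $|y'-x|$ exceed $1+\delta$. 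Your proposal would need to be replaced by, or at least supplemented with, a construction of this kind for the range $\theta>\pi/6$; as written, it cannot be repaired by merely ``taking $R$ large and $\delta$ small'' because the failure is at a fixed angle, not in an error term.

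Two smaller points. First, you assert that the hypothesis $d(y,B_{R+r}(0)\setminus B_1(x))\le 1$ ``pins down $|x-y|=1+O(r)$''; it does not give such a quantitative conclusion, and in any case the real content of that hypothesis (as used in the paper) is to bound the \emph{angle} $\theta$, not $|x-y|$. Second, i)--iii) and the containment $x',y'\in B_{R-1/2}(0)$ do transfer as you say, but they are not where the difficulty lies.
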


\begin{proof}
Let $x \in B_{R}\backslash B_{R-r}$ and $y \in B_{R-r}$ with
 $|x-y|>1$ and $d(y,B_{R+r}\backslash B_1(x))\le 1$. Without loss of
 generality, we assume that $|x-y|<2$; otherwise, an accurate construction
 as in the previous lemma is possible. Let $\eps := 10^{-3}$
be fixed, and note that $\sin(\pi/3 - 3 \eps) >1/2$. 
Note also that provided $R$ is large enough, the vector
 $y-x$ is at an angle at most $\pi/3 + \eps$ with
the hyperplane tangent to $B_{|x|}$ at $x$. 

 Let $u$ and $v$ be unit vectors in the vector space generated by
 $x$ and $y$ such that 
the angle between $y-x$ and $u$ is equal to $\pi/3 +\eps$,
the angle between $y-x$ and $v$ is equal to $\pi/3 +2 \eps$,
 and such that both $u$ and $v$ have negative scalar product with $x$.
 (The choice of $u$ and of $v$ is unique provided that $R$ is large enough.)
 Then we set 
$x'=x+(1-3\delta)v$ and
 $y'=y+(1-3\delta)u$.

It is clear that for $\delta$ small enough  $x',y'\in B(0,R-1/2)$.
 Moreover items {\it i)} and {\it ii)} 
 are trivially true, 
and {\it iii)} holds provided $\delta$ is small enough.
The quadrilateral $x,y,y',x'$ is almost  a parallelogram;
the opposite edges $(y,y')$ and $(x,x')$ are  
the same length, and at an angle of $\eps$ to each other.
The angle between $y-x$ and $x'-x$ is equal to 
$\pi/3 + 2 \eps$, and all sides of this quadrilateral are
of length at least $1- 3 \delta$.
Provided $\delta$ is
small enough, the norms of the diagonals 
$|y'-x|$ and $|y-x'|$ exceed $1+ \delta$; that is,
items
 {\it iv)} and {\it v)} hold.
Also, for  $R$ sufficiently large, the angle between $y-y'$ and
 $y$ is smaller than $\pi/6 + 2 \eps $.
 Moreover $y\in B_{R-r}(0)$ so $d(y',B_R(0)^c\backslash B_1(y))\ge 1+r/2$.
 Item {\it vi)} follows for $\delta$ small enough. The proof of the last item  {\it vii)} is similar.
%
%
%
%
\end{proof}

\begin{proof}[Proof of Lemma \ref{lemmegeometric}]
Assume now that parameters $R>10$, $r>0$ and $\delta >0$
 are chosen such that all items in Lemmas \ref{lemmegeo1} 
 and \ref{lemmegeo2} are satisfied. 
Assume also that $R \in \N$ and that $\delta < R/99$.
 Noting that
$$
\frac{
\Pr[A_{x,n,p,q}\cap F_x^c ]}{  
\Pr[A_{x,n,p,q} \cap F_x]} 
=\frac{ \Pr[ F_x^c | A_{x,n,p,q}]}{ \Pr[ F_x | A_{x,n,p,q}]}\geq    \Pr[ F_x^c | A_{x,n,p,q}],
$$   
we have to find a lower bound for  $\Pr[ F_x^c | A_{x,n,p,q}]$ of the
 type $ce^{-\mu/c}$. Assume for now that $B_{R}(x) \subset
B_n \setminus B_1$. (We shall consider the other cases at the end.)

 Consider creating the Poisson processes $\Po_{\lambda_0}$, $\Q_\mu$ and $\Po_{\lambda_0,p,q}$
in stages, as follows
 (this is similar to arguments seen in \cite{FPR0} and \cite{FPR}).

 First we generate the points of $\Po_{\lambda_0}$ outside $B_{R}(x)$,
the points of
$\Q_{\mu}$  outside $B_{1/2}(x)$ and the retained points
 $\Po_{\lambda_0,p,q}$ outside $B_{R}(x)$.

At this stage, we let  $V$ be the set of vertices
of $\Po_{\lambda_0,p,q}$ created so far  which are 
connected by a path (in $G(\Po_{\lambda_0,p,q} \setminus B_R(x),1)$) to $B_1$,
and let $T$ be the set of vertices
of $\Po_{\lambda_0,p,q}$ created so far  which 
are connected by a path to $B_n^c$.    Let $V_1$ and $T_1$
denote the $1$-neighbourhood of $V$, $T$ respectively. 
Let $E_1$ be the event that
$V\cap T_1=\emptyset$, $T\cap V_1=\emptyset$ and $T_1\cap B_R\neq \emptyset$
and  $V_1\cap B_R\neq \emptyset$. 
If $A_{x,n,p,q}$ is realized, $x$ is pivotal and therefore 
$E_1$ occurs.

Now, assuming $E_1$ occurs, build up the point process of retained vertices
  $\Po_{\lambda_0,p,q}$ inwards into    $B_{R}(x) \cap (V_1 \cup T_1) $ 
from the boundary of the ball $B_{R}(x)$, until the appearance
of  the first new  vertex. 
 Denote this new vertex $X$. 
Let $E_2$ be the event that such a vertex exists,
that is, $E_2 := 
\{\Po_{\lambda_0,p,q} \cap B_R(x) \cap (V_1 \cup T_1) \neq \emptyset\}$.
If $A_{x,n,p,q}$ is realized, then $E_2$ must occur.

Assuming that $E_2$ occurs, we suppose $X \in T_1$
 (the other case, $X\in V_1$, can be treated in the same way).
We now have to distinguish several cases.

{\it Case 1:} $X\in B_R(x)\backslash B_{R-r}(x)$, which means that $X$ is close to the boundary of $B_R(x)$. Now we have also two sub-cases to consider:

{\it Case 1.1:} there exists $Y$ in $V\cap  B_{R+r}(x)\backslash B_{R}(x)$. Note that $X$ and $Y$ are exactly in position for applying Lemma \ref{lemmegeo1}. 
By that result, there exist $x_1$ and $y_1$ in $B_{R-1/2}(x)$ such that any point in $B_\delta(x_1)$ (respectively $B_\delta(y_1)$) can be the next point for the path from $B_n$ (respectively $B_1$) going to $x$. These points are now sufficiently far from the boundary of $B_R(x)$ to  create two paths $(x_i)_{1\le i\le R+2}$ and $(y_i)_{1\le i\le R+2}$ of $R+2$ points (deterministic, given
what has been revealed so far) from $T$ to $x$ and
 $V$ to $x$, where each step size in each path is at most $1-2 \delta$
(that is, $|x_{i+1} -x_{i} |< 1 - 2 \delta$ and
 $|y_{i+1} -y_{i} |< 1 - 2 \delta$ for each $i$, and
 $|x-x_{R+2}| \leq 1 - 2 \delta$ and
 $|x-y_{R+2}| \leq 1 - 2 \delta$),   and
   such that these paths remain a distance greater than $(1+2\delta)$
  from each other; see Figure \ref{figure}.

\begin{figure}[!ht]
\begin{center}
\psfrag{X}{\small$X$}
\psfrag{Y}{\small$Y$}
\psfrag{X1}{\small$x_1$}
\psfrag{Y1}{\small$y_1$}
\psfrag{X2}{\small$x_2$}
\psfrag{Y2}{\small$y_2$}
\psfrag{X3}{\small$x_3$}
\psfrag{Y3}{\small$y_3$}
\psfrag{X4}{\small$x_4$}
\psfrag{Y4}{\small$y_4$}
\psfrag{R}{\small$R$}
\psfrag{Rr}{\small$R+r$}
\psfrag{Rmr}{\small$R-r$}
\includegraphics[width=10cm,height=9cm]{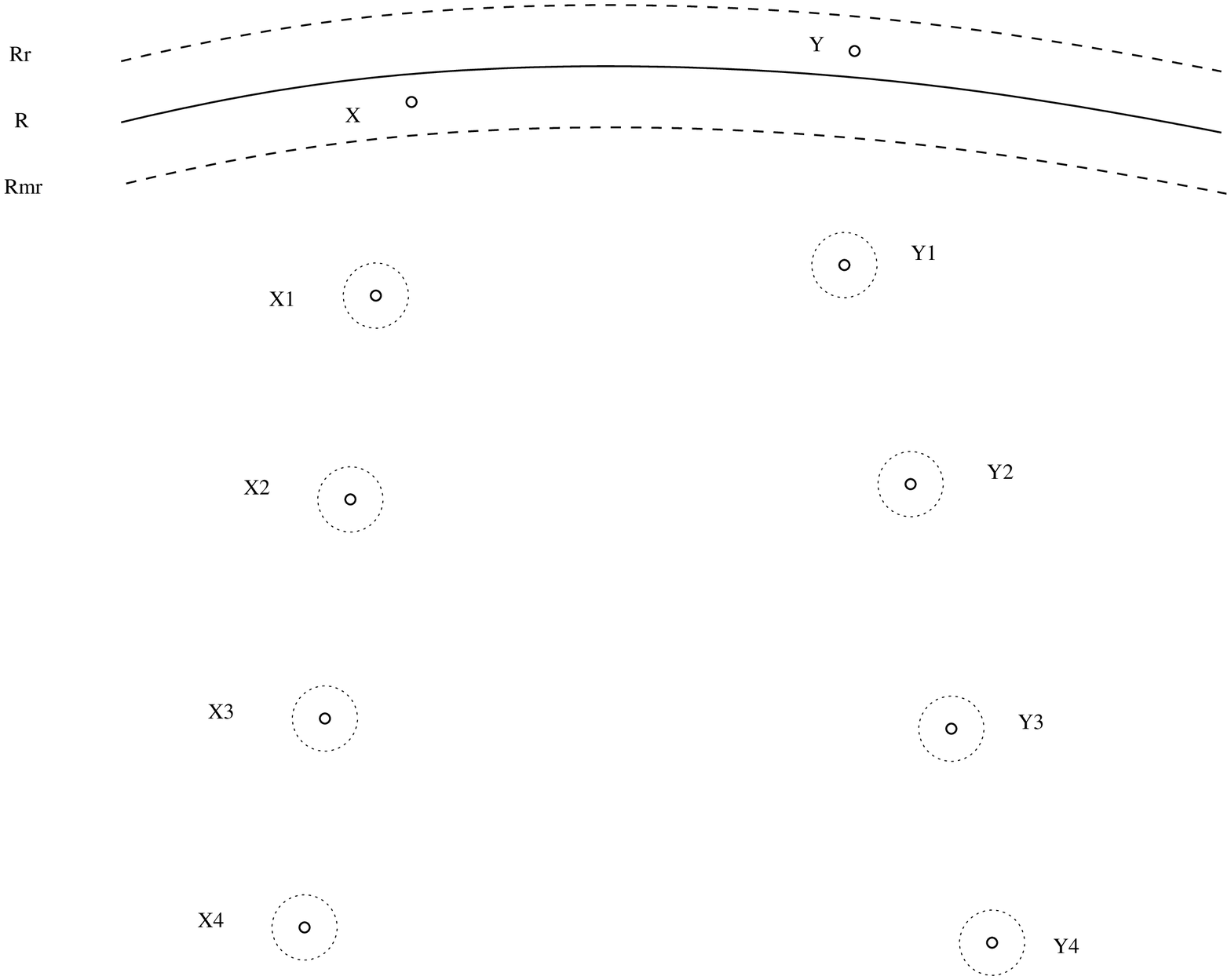}
\end{center}
\caption{Example of paths $(x_i)_{1\le i\le R+2}$ and $(y_i)_{1\le i\le R+2}$.}\label{figure}
\end{figure}

If each ball $B_{\delta}(x_i)$ or $B_{\delta}(y_i)$, 
$1\le i \le R+2$,  contains a single (retained) point,
and there are no other points of $\Po_{\lambda_0} \cap B_{R}(x)$ besides
those already considered, and there are no points of $\Q_{\mu}$ inside
$B_{1/2}(x)$,  then $A_{x,n,p,q} \cap F_x^c$ occurs. Recall that $p$ and $q$ 
are at least $\alpha$. Then the probability that all of the above
 occur is clearly bounded from below by $ce^{-\pi_d\mu/2^d}$ for a suitable constant $c>0$
(depending on $\lambda_0,R,r,\delta$ and $\alpha$),
 where $\pi_d$ is the volume of the unit ball in $\R^d$. 
 
 {\it Case 1.2:} the set $V\cap  B_{R+r}(x)\backslash B_{R}(x)$ is empty. In this case
we continue to build inwards the point process of retained vertices
  $\Po_{\lambda_0,p,q}$, but only in $V_1$.
Continue until the next new vertex appears, denoted $Y$   and then stop.
Let $E_3$ be the event that such a new vertex $Y$ does indeed appear.
If this occurs, then two sub-cases are possible:

{\it Case 1.2.1:} the point $Y$ is close to the boundary (i.e. $Y\in B_R(x)\backslash B_{R-r}(x)$). Then 
  $X$ and $Y$ are exactly in position for applying Lemma \ref{lemmegeo1} as in
 Case 1.1 and we conclude this case in the same way.

{\it Case 1.2.2:} the point $Y$ is in $B_{R-r}(x)$. Then $X$ and $Y$ are exactly in position for applying Lemma \ref{lemmegeo2}. So there exist $x_1$ and $y_1$ in $B_{R-1/2}(x)$ such that any points in $B_\delta(x_1)$ (respectively $B_\delta(y_1)$) can be the next point for the path from $B_n$ (respectively $B_1$) going to $x$. These points are now sufficiently far from the boundary of $B_R(x)$ to  create two paths $(x_i)_{1\le i\le R+2}$ and $(y_i)_{1\le i\le R+2}$ as in 
  Case 1.1.

{\it Case 2:} $X\in B_{R-r}(x)$, which means that $X$ is far from the boundary of $B_R(x)$. Then we continue to build inwards the point process of retained vertices
  $\Po_{\lambda_0,p,q}$, but only in $V_1$, as in the  Case 1.2.
 Continue until the next new vertex appears, denoted $Y$;  let
$E_4$ be the event that such a vertex $Y$ does appear, and assume $E_4$ occurs.
 Now $X$ and $Y$ are sufficiently far from the boundary (both in
 $B_{R-r}(x)$) in order to build $(x_i)_{1\le i\le R+2}$ and
 $(y_i)_{1\le i\le R+2}$ as in   Case 1.1.

Let $E$ be the event that events $E_1,E_2$, and (if in Case 1.2) $E_3$,
and (if in Case 2) $E_4$ (along with corresponding events for the
case when $X \in V_1$) all occur. If $A_{x,n,p,q}$ occurs,
 then $E$ must occur, and therefore
$$
\Pr [ F_x^c | A_{x,n,p,q} ] = \frac{ \Pr [ F_x^c \cap A_{x,n,p,q} \cap E] }{
\Pr[A_{x,n,p,q}] } 
\geq \Pr[ F_x^c  \cap A_{x,n,p,q} | E].
$$
  Thus, the conclusion of all these cases is that there exists a constant $c>0$ such that
  $$ 
\Pr[ F_x^c | A_{x,n,p,q}]\ge ce^{-\mu/c}.
$$

  Recall that we have been assuming $B_{R}(x) \subset B_n \setminus B_1$.
 A similar proof can be derived in the other two cases,
namely the case with $B_{R}(x)\cap B_1\neq \emptyset$ and the case
with $B_{R}(x)\cap B_n^c\neq \emptyset$.
We may assume $n \geq 99R$.
When $B_{R}(x)\cap B_1\neq \emptyset$,
 consider a path  $(x_i)_{1\le i\le 2R+2}$ from outside $B_{2R}(x)$ 
to $x$ and a path  $(y_i)_{1\le i\le R+2}$ inside $B_{2R}(x)$ joining 
$x$ to $B_1$.
When $B_{R}(x)\cap B_n^c\neq \emptyset$,
 consider a path  $(x_i)_{1\le i\le 2R+2}$ from outside $B_{2R}(x)$ 
to $x$ and a path  $(y_i)_{1\le i\le R+2}$ inside $B_{2R}(x)$ joining 
$x$ to $B_n^c$. For brevity we omit the details of these cases
here (which are similar to the corresponding cases treated in
\cite{FPR0}). Lemma \ref{lemmegeometric} is proved.
\end{proof}

{\it Acknowledgement:} This work was supported in part by the Labex CEMPI (ANR-11-LABX-0007-01), the CNRS GdR 3477 GeoSto and the ANR project PPP (ANR-16-CE40-0016).

\end{document}